\NeedsTeXFormat{LaTeX2e}
\documentclass[reqno]{amsart}
\usepackage{enumerate}
\setlength{\textwidth}{15.5cm}\textheight=22.5cm\oddsidemargin=0cm\evensidemargin=0cm

\usepackage{scrextend}
\changefontsizes[16pt]{12pt}

\newtheorem{theorem}{Theorem}[section]
\newtheorem{lemma}[theorem]{Lemma}

\theoremstyle{definition}
\newtheorem{definition}[theorem]{Definition}

\newtheorem{remark}[theorem]{Remark}

\newtheorem{assumption}[theorem]{Assumption}

\newcommand{\norm}[1]{\left\Vert#1\right\Vert}

\numberwithin{equation}{section}
\usepackage{amsfonts}

\usepackage{amsmath}
\usepackage{amssymb}

\begin{document}
\font\nho=cmr10
\def\dive{\mathrm{div}}
\def\cal{\mathcal}
\def\L{\cal L}

\def \ud{\underline }
\def\id{{\indent }}
\def\f{\frac}
\def\non{{\noindent}}
 \def\le{\leqslant} 
 \def\leq{\leqslant}
 \def\geq{\geqslant} 
\def\rar{\rightarrow}
\def\Rar{\Rightarrow}
\def\ti{\times}
\def\i{\mathbb I}
\def\j{\mathbb J}
\def\si{\sigma}
\def\Ga{\Gamma}
\def\ga{\gamma}
\def\ld{{\lambda}}
\def\Si{\Psi}
\def\f{\mathbf F}
\def\r{\hro{R}}
\def\e{\cal{E}}
\def\B{\cal B}
\def\A{\mathcal{A}}
\def\p{\mathbb P}

\def\tet{\theta}
\def\Tet{\Theta}
\def\hro{\mathbb}
\def\ho{\mathcal}
\def\P{\ho P}
\def\E{\mathcal{E}}
\def\n{\mathbb{N}}
\def\M{\mathbb{M}}
\def\dMu{\mathbf{U}}
\def\dMcs{\mathbf{C}}
\def\dMcu{\mathbf{C^u}}
\def\vk{\vskip 0.2cm}
\def\td{\Leftrightarrow}
\def\df{\frac}
\def\Wei{\mathrm{We}}
\def\Rey{\mathrm{Re}}
\def\s{\mathbb S}
\def\l{\mathcal{L}}
\def\C+{C_+([t_0,\infty))}
\def\o{\cal O}

%\def\@frontmatterwidth{16cm}
%\textheight=22cm
%\oddsidemargin=0cm
%\evensidemargin=0cm
%\begin{frontmatter}
\title[AAP-solutions of Parabolic equations on hyperbolic manifolds]{Asymptotically almost periodic solutions to parabolic equations on the real hyperbolic manifold}

%\author[N.T. Huy]{Nguyen Thieu Huy }
%\address{Nguyen Thieu Huy\hfill\break
%School of Applied Mathematics and Informatics,\hfill\break
% Hanoi University of Science and Technology\hfill\break
% Vien Toan ung dung va Tin hoc, Dai hoc Bach khoa Hanoi\\
% 1 Dai Co Viet, Hanoi, Vietnam}
% \email{huy.nguyenthieu@hust.edu.vn}

\author[P.T. Xuan]{Pham Truong Xuan}
\address{Pham Truong Xuan \hfill\break
Corresponding author, Faculty of Computer Science and Engineering,  Department of Mathematics, Thuyloi university \hfill\break
Khoa Cong nghe Thong tin, Bo mon Toan, Dai hoc Thuy loi, 175 Tay Son, Dong Da, Ha Noi, Viet Nam} 
\email{xuanpt@tlu.edu.vn or phamtruongxuan.k5@gmail.com}

\author[N.T. Van]{Nguyen Thi Van}
\address{Nguyen Thi Van \hfill\break
Faculty of Computer Science and Engineering,  Department of Mathematics, Thuyloi university\hfill\break
Khoa Cong nghe Thong tin, Bo mon Toan, Dai hoc Thuy loi, 175 Tay Son, Dong Da, Ha Noi, Viet Nam}
\email{van@tlu.edu.vn}

%\author[T.N.H. Vu]{ Vu Thi Ngoc Ha}
%\address{ Vu Thi Ngoc Ha\hfill\break
%School of Applied Mathematics and Informatics, \hfill\break
% Hanoi University of Science and Technology\hfill\break
% Vien Toan ung dung va Tin hoc, Dai hoc Bach khoa Hanoi\\
% 1 Dai Co Viet, Hanoi, Vietnam}
% \email{ha.vuthingoc@hust.edu.vn}

\author[B. Quoc]{Bui Quoc}
\address{Bui Quoc \hfill\break
Department of Mathematics-Mechanics-Informatics, Hanoi University of Science \hfill\break
Khoa Toan-Co-Tin, Dai hoc khoa hoc tu nhien, Dai hoc Quoc gia Ha Noi, 336 Nguyen Trai, Thanh Xuan, Ha Noi, Viet Nam}
\email{buiquoc\_t62@hus.edu.vn}

%\thanks[label2]{}
\begin{abstract}  
In this work we study the existence and the asymptotic behaviour of the asymptotically almost periodic mild solutions of the vectorial parabolic equations on the real hyperbolic manifold $\mathbb{H}^d(\mathbb{R})$ ($d \geqslant 2$). We will consider the vectorial laplace operator in the sense of Ebin-Marsden's laplace operator. Our method is based on certain dispertive and smoothing estimates of the semigroup generated by the linearized vectorial heat equation and the fixed point argument. First, we prove the existence and the uniqueness of the asymptotically almost periodic mild solution for the linearized equations. Next, using the fixed point argument, we can pass from linearized equations to semilinear equations to prove the existence, uniqueness, exponential decay and stability of the solutions. Our abstract results will be applied to the incompressible Navier-Stokes equation and the semilinear vectorial heat equation.
\end{abstract}

\subjclass[2010]{Primary 35Q30, 35B35; Secondary 58J35, 32Q45}

\keywords{Navier-Stokes equation, vectorial heat equation, real hyperbolic manifold, asymptotically almost periodic mild solution, exponential stability}

\maketitle

\tableofcontents

\section{Introduction}
In 1970, Ebin and Marsden introduced in \cite{EbiMa} the formula of the Navier-Stokes equation on an Einstein manifold with the negative Ricci curvature, where a real hyperbolic manifold $(\mathbb{H}^d(\mathbb{R}),g) \, (d\geq 2)$ is one specific case. They given in \cite{EbiMa} the generalized vectorial Laplace operator $L$ acting on vector fields $u$ by the mean of the deformation tensor
$$Lu := \frac{1}{2}\mathrm{div}(\nabla u + \nabla u^t)^{\sharp}.$$
The relations between the Ebin-Marsden's laplace operator $L$ and the Bochner-laplacian $\overrightarrow{\Delta}$ is given by
$$Lu = \overrightarrow{\Delta}u + R(u)$$
where $R$ is the Ricci operator defined on manifolds (in detail see Section \ref{2}).

Since then, this notion has been used in the works of Czubak
and Chan \cite{Cz1,Cz2,Cz3} and also Lichtenfelz \cite{Li2016} to prove the non-uniqueness of weak Leray solution of Navier-
Stokes equation on the three-dimensional hyperbolic manifolds. Furthermore, Pierfelice \cite{Pi} has proved the
dispersive and smoothing estimates ($L^p-L^q$-smoothing properties) for Stokes semigroups on the generalized non-compact manifolds with
negative Ricci curvature then combines these estimates with Kato-iteration method to prove the existence
and uniqueness of strong mild solutions to Navier-Stokes equations. For some related works on the compact manifolds with the equations associated with the Ebin-Marsden's laplace operator, we refer the reader to \cite{Fa2018,Fa2020,Li2016,MiTa2001,Sa}.

The existence, uniqueness and stability of the periodic mild solution for the incompressible Navier-Stokes equation on the non-compact Einstein manifolds with negative Ricci curvature have obtained in the recent work \cite{HuyXuan2018}. The method is based on the $L^p-L^q$-smoothing properties of the Stokes semigroup and the Massera-type principle.

In the present paper, inspiring from \cite{HuyXuan2018} we extend to study the existence and asymptotic behaviour of the asymptotically almost periodic solutions of  the generalized vectorial parabolic equations on the real hyperbolic manifold ${\bf M} :=\mathbb{H}^d(\mathbb{R})\, (d\geq 2)$. In particular, we study the following abstract parabolic equations
\begin{align}\label{CauchyHeat0}
\begin{cases}
\partial_t u = Lu + BG(u,t),\cr
u|_{t=0}=u_0 \in Y(\Gamma(T{\bf M})),
\end{cases}
\end{align}
where $L$ is Ebin-Marsden's laplace operator, $B:X\to Y$ is connection operator and $G(u,t):Y\times \mathbb{R}\to X$ is the nonlinear part satisfying some conditions (see Section 2). If $B=\mathbb{P}\dive,\,G(u,t) = (u\otimes u +F)(t)$, then Equation \eqref{CauchyHeat0} reduces to the Navier-Stokes equation (see Section \ref{4.1}). If $B= Id$ (identity operator), $G(u,t)=|u|^{k-1}u + f(t), \, (k\geqslant 2)$, then Equation \eqref{CauchyHeat0} reduces to the semilinear vectorial heat equation (see Section \ref{4.2}). 

On the other hand, we would like to note that the semilinear scalar heat equation associated with the scalar Laplace-Beltrami operator was studied on the hyperbolic manifolds in some recent works \cite{BaTe,MaTe,No,Va2018} and references therein.

The linear equation correspoding to \eqref{CauchyHeat0} is
\begin{align}\label{L}
\begin{cases}
\partial_t u = Lu + Bf(t),\cr
u|_{t=0}=u_0 \in Y(\Gamma(T{\bf M})).
\end{cases}
\end{align}
Since the sectional curvature on $({\bf M},g)$ is constant and negative (more precisely, it equals to $-(d-1)$), Pierfelice \cite{Pi} proved the $L^p-L^q$-smoothing properties for the vectorial heat semigroup, associated with the Ebin-Marsden's operator. Using these estimates, we prove the Massera-type principle that if $f$ is asymptotically almost periodic, then there exists an unique asymptotically almost periodic mild solution to the linear equation \eqref{L} (Section \ref{3.1}). Next, by the fixed point arguments we can pass from the linear equation to the semilinear equation \eqref{CauchyHeat0} to prove the existence of the asymptotically almost perioidc mild solutions of such equations. We also establish the exponential decay and stability of such solutions by using the cone inequality (Section \ref{3.2}). Finally, our abstract results will be applied to the Navier-Stokes equation and the semilinear vectorial heat equation (Section \ref{4}).

\medskip
{\bf Notations.} Throughout this paper we use the following notations:\\ 
$\bullet$ Let $X$ be a Banach space, we denote the Banach space of the continuous functions from $\mathbb{R}_+$ to $X$ by 
$$C_b(\r_+, X):=\{f:\r_+ \to X \mid f\hbox{ is continuous on $\mathbb{R}_+$ and }\sup_{t\in\r_+}\|f(t)\|_X<\infty\}$$
with the norm $\|f\|_{C_b(\r_+, X)}:=\sup_{t\in\r_+}\|f(t)\|_X.$\\
We denote also the Banach space of the continuous functions from $\mathbb{R}$ to $X$ by  
$$C_b(\r, X):=\{f:\r \to X \mid f\hbox{ is continuous on $\r$ and }\sup_{t\in\r}\|f(t)\|_X<\infty\}$$
with the norm $\|f\|_{C_b(\r, X)}:=\sup_{t\in\r}\|f(t)\|_X$.\\
$\bullet$ We denote the Levi-Civita connection by $\nabla$ and the set of vector field by $\Gamma(T{\bf M})$ and the set of tensor fields with second order by $\Gamma(T{\bf M}\otimes T{\bf M})$.\\ 
$\bullet$ We use the norms of tensor fields $T$ on the manifold $({\bf M},g)$:
$$\norm{T}_{L^p(\Gamma(T{\bf M}))} = \left( \int_{\bf M} |T|^p \mathrm{dVol}_{\bf M} \right)^{1/p},\hbox{   } |T|=\left< T,T \right>_g^{1/2}.$$

\section{Vectorial parabolic equations on the real hyperbolic manifold}\label{2}
Let  $({\bf M}=:\mathbb{H}^d(\mathbb{R}),g)$  be a real hyperbolic manifold of dimension $d\geq 2$ which is realized as the upper sheet 
$$x_0^2-x_1^2-x_2^2...-x_d^2 = 1 \,  \,( x_0\geq 1),$$
of hyperboloid in $\mathbb{R}^{d+1}$, equipped with the Riemannian metric 
$$g := -dx_0^2 + dx_1^2 + ... + dx_d^2.$$
In geodesic polar coordinates, the hyperbolic manifold is 
$$\mathbb{H}^d(\mathbb{R}): = \left\{ (\cosh \tau, \omega \sinh \tau), \, \tau\geq 0, \omega \in \mathbb{S}^{d-1}  \right\}$$
with the metric 
$$g =: d\tau^2+(\sinh\tau)^2d\omega^2$$
where  $d\omega^2$ is the canonical metric on the sphere $\mathbb{S}^{d-1}$.  
A remarkable property on $M$ is the Ricci curvature tensor : $\mathrm{Ric}_{ij}=-(d-1)g_{ij}$. 

Ebin and Marsden introduced the vectorial Laplacian $L$ on vector field $u$ by using the deformation tensor (see \cite{EbiMa} and more detail in \cite{Tay,Pi}):
$$Lu := \frac{1}{2}\mathrm{div}(\nabla u + \nabla u^t)^{\sharp},$$
where $\omega^{\sharp}$ is a vector field associated with the 1-form $\omega$ by $g(\omega^{\sharp},Y) = \omega(Y) \, \forall Y \in \Gamma(T{\bf M})$.
Since $\mathrm{div}\, u=0$ , $L$ can be expressed as 
$$Lu = \overrightarrow{\Delta}u + R(u),$$
where $\overrightarrow{\Delta}u =- \nabla^*\nabla u= \mathrm{Tr}_g(\nabla^2u)$ is the Bochner-Laplacian
and $R(u)=(\mathrm{Ric}(u,\cdot))^{\sharp}$ is the Ricci operator. Since $\mathrm{Ric}(u,\cdot)=-(d-1)g(u,\cdot)$, we have $R(u)=-(d-1)u$ and
$$Lu = \overrightarrow{\Delta}u -(d-1)u.$$

Putting $\mathcal{A}u: = -Lu$, we have that $e^{-t\A}$ is the semigroup associated with the homogeneous Cauchy problem of the vectorial heat equation
$$\partial_t u = -\mathcal{A}u.$$
We now recall the $L^p-L^q$ dispersive and smoothing properties of the semigroup $e^{-t\A}$.
\begin{lemma}{\rm(\cite[Theorem 4.1, Corollary 4.3]{Pi}):}\label{estimates}
\item[i)] For $t>0$, and $p$, $q$ such that $1\leq p \leq q \leq \infty$, 
the following dispersive estimates hold: 
\begin{equation}\label{dispersive}
\left\| e^{-t\mathcal{A}} u_0\right\|_{L^q} \leq [h_d(t)]^{\frac{1}{p}-\frac{1}{q}}e^{-t(d-1 + \gamma_{p,q})}\left\| u_0 \right\|_{L^p}\hbox{ for all }u_0 \in L^p(\Gamma(T{\bf M}))
\end{equation}
 where $h_d(t): = C\max\left( \frac{1}{t^{d/2}},1 \right)$, 
   $\gamma_{p,q}:=\frac{\delta_d}{2}\left[ \left(\frac{1}{p} - \frac{1}{q} \right) + \frac{8}{q}\left( 1 - \frac{1}{p} \right) \right]$ and $\delta_d$ are positive constants depending only on $d$.  
\item[ii)] For $p$ and $q$ such that $1<p\leq q <\infty$ we obtain for all $t>0$:
\begin{equation}
\left\| \nabla e^{-t\mathcal{A}}u_0 \right\|_{L^q} \leq [h_d(t)]^{\frac{1}{p}-\frac{1}{q}+\frac{1}{d}}e^{-t\left(d-1  + \frac{\gamma_{q,q}+\gamma_{p,q}}{2} \right)} \left\| u_0 \right\|_{L^p}
\end{equation}
\begin{equation}
\left\| e^{-t\mathcal{A}}\nabla^* T_0 \right\|_{L^q} \leq [h_d(t)]^{\frac{1}{p}-\frac{1}{q}+\frac{1}{d}}e^{-t\left(d-1  + \frac{\gamma_{q,q}+\gamma_{p,q}}{2} \right)} \left\| T_0 \right\|_{L^p}
\end{equation}
for all $u_0\in L^p(\Gamma(T{\bf M}))$ and all 
 tensor $T_0 \in L^p(\Gamma(T{\bf M} \otimes T^*{\bf M}))$. 
\item[iii)]  As a consequence of (ii) we obtain  for $t>0$ that
\begin{equation}
\left\| e^{-t\mathcal{A}}\mathrm{div}T^{\sharp}_0 \right\|_{L^q} \leq [h_d(t)]^{\frac{1}{p}-\frac{1}{q}+\frac{1}{d}}e^{-t\left(d-1 + \frac{\gamma_{q,q}+\gamma_{p,q}}{2} \right)} \left\| T_0^{\sharp}\right\|_{L^p}
\end{equation}
for all tensor $T^{\sharp}_0 \in L^p(\Gamma(T{\bf M}\otimes T{\bf M}))$.
\end{lemma}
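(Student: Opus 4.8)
The result is Pierfelice's \cite{Pi}; I outline the route one would take to establish it. The natural starting point is the splitting $\A = -\overrightarrow{\Delta} + (d-1)\mathrm{Id}$, which lets the scalar factor $e^{-(d-1)t}$ be pulled out of the semigroup at once and reduces everything to bounds for the Bochner heat semigroup $e^{t\overrightarrow{\Delta}}$ acting on sections of $T{\bf M}$. To pass from the vectorial to the scalar setting, the plan is to invoke a domination principle (Kato's inequality together with the Hess--Schrader--Uhlenbrock/Simon comparison of semigroups): since the Weitzenb\"ock curvature term on $({\bf M},g)$ is bounded below by a constant, the kernel of $e^{t\overrightarrow{\Delta}}$ is pointwise dominated by the scalar heat kernel $p_t(x,y)$ of $\mathbb{H}^d(\mathbb{R})$, so it suffices to estimate the latter.

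For the scalar kernel I would use the explicit Gaussian-type bounds available on real hyperbolic space. In the short-time regime the kernel behaves like the Euclidean one, $p_t \sim t^{-d/2}$, which produces the factor $t^{-d/2}$ inside $h_d(t)$; in the long-time regime it is controlled by a fixed power times $e^{-\rho^2 t}$, with $\rho=(d-1)/2$ the bottom of the $L^2$-spectrum, which is absorbed into the bounded branch of $h_d$ and into the exponential. The $p,q$-\emph{dependence} of the decay rate $\gamma_{p,q}$ is the genuinely hyperbolic feature: unlike in $\mathbb{R}^d$, the $L^p$-spectrum of $-\overrightarrow{\Delta}$ is a $p$-dependent parabolic region, so the sharp exponential rate one can extract depends on the pair $(p,q)$, and tracking it through the interpolation below is what yields the stated $\gamma_{p,q}$.

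Part (i) then follows by interpolation: establish the $L^1$--$L^\infty$ endpoint directly from the pointwise kernel bound (integrating against the volume element $(\sinh\tau)^{d-1}\,d\tau\,d\omega$), and the $L^p$--$L^p$ contractivity with its exponential decay from spectral theory; Riesz--Thorin/Stein interpolation between these endpoints produces the factor $[h_d(t)]^{1/p-1/q}$ together with the rate $e^{-t(d-1+\gamma_{p,q})}$. For part (ii) I would exploit the analyticity of $e^{-t\A}$ and the semigroup law, writing $\nabla e^{-t\A}=\bigl(\nabla e^{-(t/2)\A}\bigr)\,e^{-(t/2)\A}$ and combining the $L^p$--$L^q$ bound from (i) on the second factor with the gradient smoothing bound $\|\nabla e^{-s\A}\|_{L^q\to L^q}\lesssim s^{-1/2}$ on the first; since $[h_d(t)]^{1/d}\sim t^{-1/2}$ for small $t$, this is exactly what upgrades the exponent of $h_d$ by $1/d$. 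The companion estimate for $e^{-t\A}\nabla^*$ is then obtained by duality, as $\nabla^*$ is the formal adjoint of $\nabla$ and the exponents $p,q$ pass to their conjugates. Finally (iii) is immediate from (ii), because $\mathrm{div}\,T^\sharp$ equals, up to sign, $\nabla^*$ applied to $T$.

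The main obstacle is the simultaneous control of the two time regimes while keeping the exponential rate sharp: one must first derive the tensorial pointwise kernel bound through the domination step, and then pin down the exact $p,q$-dependent exponent $\gamma_{p,q}$ from the $L^p$-spectral picture. This is precisely where the hyperbolic geometry enters essentially, via the parabolic $L^p$-spectrum and the exponential volume growth encoded in $(\sinh\tau)^{d-1}$.
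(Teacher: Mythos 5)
There is nothing in the paper to compare your argument against: Lemma~\ref{estimates} is not proved in this paper at all. It is imported verbatim from Pierfelice \cite[Theorem 4.1, Corollary 4.3]{Pi}, and the authors use it as a black box (indeed, they immediately repackage it into the abstract inequalities \eqref{estimates1}--\eqref{estimates2} and never return to it). So the relevant question is only whether your sketch is a faithful reconstruction of the cited proof.

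On that score your outline follows the standard and essentially correct route: pull out the factor $e^{-(d-1)t}$ coming from the Ricci term in $Lu=\overrightarrow{\Delta}u-(d-1)u$, dominate the vectorial heat kernel by the scalar one via Kato/Hess--Schrader--Uhlenbrock, use the explicit small- and large-time behaviour of the hyperbolic heat kernel to get the $L^1$--$L^\infty$ endpoint and the two branches of $h_d(t)$, interpolate against the $p$-dependent $L^p$-spectral decay to produce $\gamma_{p,q}$, obtain the gradient estimate from analyticity and the factorization $\nabla e^{-t\mathcal{A}}=\nabla e^{-(t/2)\mathcal{A}}\,e^{-(t/2)\mathcal{A}}$, and get the $\nabla^*$ and $\mathrm{div}$ statements by duality. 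This is in substance how Pierfelice proceeds. What keeps it a sketch rather than a proof is that the genuinely quantitative steps are asserted, not carried out: you do not derive the specific (and asymmetric) form $\gamma_{p,q}=\frac{\delta_d}{2}\bigl[(\frac1p-\frac1q)+\frac{8}{q}(1-\frac1p)\bigr]$, which cannot be guessed from Riesz--Thorin alone and requires tracking the $L^p$-spectral region through the interpolation; you do not verify that the domination principle applies at the level of the Ebin--Marsden operator acting on vector fields (the Weitzenb\"ock term must be identified and bounded explicitly on $\mathbb{H}^d(\mathbb{R})$); and the restriction $1<p\le q<\infty$ in part (ii), versus $1\le p\le q\le\infty$ in part (i), signals that the gradient bound is not a purely formal consequence of analyticity at the endpoints. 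If you intend this as a proof rather than a reading guide to \cite{Pi}, those are the gaps to fill.
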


For convenient to state and prove the abstract results, we rewrite the inequalities in Assertions $i)$ and $iii)$ in the generalized forms:
Putting $B=\dive$ or $\mathrm{Id}$, we have the following inequalities
\begin{equation}\label{estimates1}
\norm{e^{-t\cal{A}}u_0}_{Y(\Gamma(T{\bf M}))} \leqslant e^{-\sigma t} \norm{u_0}_{Y(\Gamma(T{\bf M}))},
\end{equation}
\begin{equation}\label{estimates2}
\norm{e^{-t\cal{A}}B u_0}_{Y(\Gamma(T{\bf M}))} \leqslant \alpha (t^{-\theta}+ 1)e^{-\beta t} \norm{u_0}_{X(\Gamma(T{\bf M}))},
\end{equation}
where the constant $0\leq \theta<1$ ($\theta =0$ and $\alpha=1$ if $X=Y$) and $0<\alpha,$ $0<\sigma < \beta$ depending only on the operator $B$ and the spaces $X, \, Y$ (for more details see the applications in Section \ref{4}).

In the rest of this paper we study the vectorial semilinear parabolic equation on the real hyperbolic manifold ${\bf M}$:
\begin{align}\label{CauchyHeat}
\begin{cases}
\partial_t u = -\cal{A}u + BG(u,t),\cr
u|_{t=0}=u_0 \in Y(\Gamma(T{\bf M})).
\end{cases}
\end{align}
To establish the existence and the decay of the solution of \eqref{CauchyHeat} we need to pose an assumption on the nonlinear part as follows. 
\begin{assumption}\label{AssG}
\item[i)] $G(u,t):Y(\Gamma(T{\bf M})) \times \r_+ \to X(\Gamma(T{\bf M}))$ is continuous and 
$$\sup_{t\geq 0}\norm{G(0,t)}_{X(\Gamma(T{\bf M}))}<\infty.$$
\item[ii)] $G(u,t)$ is locally Lipschitz, i.e. for $u_1$ and $u_2$ in the small ball of $Y$,
$$B_\rho = \left\{ u\in Y(\Gamma(T{\bf M}))| \norm{u}_{Y(\Gamma(T{\bf M}))}\leq \rho \right\},$$
we have 
$$\norm{G(u_1,t) - G(u_2,t)}_{X(\Gamma(T{\bf M}))} \leq L\norm{u_1-u_2}_{Y(\Gamma(T{\bf M}))}, \hbox{  } L>0.$$
\end{assumption}

\section{Asymptotically almost periodic mild solution: the existence and asymptotic behaviour}
\subsection{Asymptotically almost periodic function}
We recall the definitions of almost periodic and asymptotically almost periodic functions (for details see \cite{Che}).
\begin{definition}
A function  $h \in C_b(\r, X )$ is called almost periodic function if for each $ \epsilon  > 0$, there exists $l_{\epsilon}>0 $ such that every interval of length $l_{\epsilon}$ contains at least a number $T $ with the following property
\begin{equation}
 \sup_{t \in \r } \| h(t+T)  - h(t) \|_X < \epsilon.
\end{equation}
The collection of all almost periodic functions $h:\r \to X $ will be denoted by $AP(\r,X)$ which is a Banach space endowed with the norm $\|h\|_{ AP(\r,X)}=\sup_{t\in\r}\|h(t)\|_X.$
\end{definition}
To introduce the asymptotically almost periodic functions, we need the space  $C_0 (\r_+,X)$, that is, the collection of all asymptotic and continuous functions $\varphi: \r_+ \to X$ such that
$$\lim_{t \to +\infty } \| \varphi(t) \|_X = 0.$$
Clearly, $C_0 (\r_+,X)$  is a Banach space endowed with the norm $\|\varphi\|_{C_0 (\r_+,X)}=\sup_{t\in\r_+}\|\varphi(t)\|_X$
\begin{definition} 
A continuous function  $f \in C(\r_+, X )$  is said to be asymptotically almost periodic if there exist  $h \in AP(\r,X)$ and $ \varphi\in C_0(\r_+,X)$ such that
\begin{equation}
f(t) = h(t) + \varphi(t).
\end{equation}
We denote $AAP(\r_+, X):= \{f:\r_+ \to X \mid f\hbox{ is asymptotically almost periodic on $\r_+$}\}$. We have that $AAP(\r_+,X)$ is a Banach space with the norm defined by $\|f\|_{ AAP(\r_+,X)}=\|h\|_{ AP(\r, X)}+\|\varphi\|_{ C_0(\r_+,X)}$.
\end{definition}

\subsection{The linear equation}\label{3.1}
We consider the following linear equation
\begin{align}\label{CauchyHeat1}
\begin{cases}
\partial_t u + \cal{A}u = B(f)(t),\cr
u|_{t=0}=u_0 \in Y(\Gamma(T{\bf M})).
\end{cases}
\end{align}
Given $t >0$,  the mild solution of the system \eqref{CauchyHeat1} is defined by
\begin{equation}\label{mild:linear1}
u(t) := e^{-t\mathcal{A}}u_0 + \int_0^te^{-(t-\tau)\mathcal{A}} B(f)(\tau) d\tau.
\end{equation}

\begin{lemma}\label{Thm:linear}
Let $({\bf M},g)$ be a $d$-dimensional real hyperbolic manifold. Suppose 
$f \in C_b(\r_+, X)$, the system \eqref{CauchyHeat1} has one and only one mild solution $u\in  
C_b(\r_+, Y))$.
Moreover, there exists a positive constant $M$, independent of $u_0$ and $f$, such that
\begin{equation}\label{esper}
\norm{u}_{C_b(\r_+, Y)}\le \norm{ u_0}_Y + M\norm{f}_{C_b(\r_+,X)}.
\end{equation}
\end{lemma}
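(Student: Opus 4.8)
The plan is to treat the three assertions separately: that the Duhamel formula \eqref{mild:linear1} produces a well-defined element of $C_b(\r_+, Y)$, that this element is the unique mild solution, and that it obeys the a priori bound \eqref{esper}. Since \eqref{mild:linear1} prescribes $u(t)$ explicitly in terms of $u_0$ and $f$, any two mild solutions must coincide pointwise, so uniqueness is immediate once the right-hand side is shown to be meaningful; the real work is to verify boundedness, continuity, and the estimate, and all of this is driven by the two smoothing inequalities \eqref{estimates1} and \eqref{estimates2}.

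For the bound (which simultaneously yields boundedness), I would apply the triangle inequality to \eqref{mild:linear1},
\begin{equation*}
\norm{u(t)}_Y \leq \norm{e^{-t\A}u_0}_Y + \int_0^t \norm{e^{-(t-\tau)\A} B f(\tau)}_Y \, d\tau,
\end{equation*}
estimate the first term by \eqref{estimates1} as $e^{-\sigma t}\norm{u_0}_Y \leq \norm{u_0}_Y$, and bound the integrand by \eqref{estimates2} as $\alpha\big((t-\tau)^{-\theta}+1\big)e^{-\beta(t-\tau)}\norm{f(\tau)}_X$. Factoring out $\norm{f}_{C_b(\r_+,X)}$ and substituting $s=t-\tau$ reduces the integral to $\alpha\int_0^t (s^{-\theta}+1)e^{-\beta s}\,ds$, which I majorize by the convergent improper integral on $(0,\infty)$. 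Setting
\begin{equation*}
M := \alpha\int_0^\infty (s^{-\theta}+1)e^{-\beta s}\,ds = \alpha\Big(\beta^{\theta-1}\Gamma(1-\theta) + \tfrac{1}{\beta}\Big),
\end{equation*}
which is finite exactly because $0\leq\theta<1$ (so $1-\theta>0$ makes the Gamma integral converge) and $\beta>0$, and which depends only on $\alpha,\beta,\theta$ and hence neither on $u_0$ nor on $f$, I obtain $\norm{u(t)}_Y \leq \norm{u_0}_Y + M\norm{f}_{C_b(\r_+,X)}$ uniformly in $t$; taking the supremum over $t\in\r_+$ gives \eqref{esper}.

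It then remains to check that $u$ is continuous from $\r_+$ into $Y$. The free term $e^{-t\A}u_0$ is continuous by the strong continuity of the semigroup $e^{-t\A}$, and reproduces the initial value $u(0)=u_0$ since the integral vanishes at $t=0$. For the Duhamel integral I would fix $t$ and estimate the increment between times $t$ and $t+\eta$ by splitting the domain of integration into a short interval adjacent to the singular point $\tau=t$ and its complement: on the complement, continuity of the integrand together with the integrable majorant $\alpha(s^{-\theta}+1)e^{-\beta s}$ allows dominated convergence, while the contribution of the short interval is made arbitrarily small using the integrability of $s^{-\theta}$ near $0$ guaranteed by $\theta<1$.

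I expect the only genuinely delicate point to be this continuity of the convolution term, precisely because its kernel is singular at $\tau=t$; every other step is a direct consequence of \eqref{estimates1} and \eqref{estimates2}. The condition $\theta<1$ is the crucial hypothesis, underpinning both the finiteness of $M$ and the continuity argument, whereas the strict inequality $\sigma<\beta$ is not needed here and will only become relevant in the subsequent almost-periodicity and exponential-decay analysis.
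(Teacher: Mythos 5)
Your proposal is correct and follows essentially the same route as the paper: the a priori bound is obtained by exactly the same application of \eqref{estimates1} and \eqref{estimates2}, the same change of variables $s=t-\tau$, the same majorization by the improper integral over $(0,\infty)$, and the same constant $M=\alpha\left(\beta^{\theta-1}\mathbf{\Gamma}(1-\theta)+\frac{1}{\beta}\right)$. The only difference is that you explicitly address continuity of the Duhamel term and uniqueness, points the paper's proof passes over in silence, so your write-up is if anything more complete.
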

\begin{proof}
\def\A{\mathcal{A}}
Consider the function $u$ defined by the formula  \eqref{mild:linear1} with $u(0) = u_0$. We need to show that $\|u(t)\|_Y$ is bounded. Using the inequalities \eqref{estimates1} and \eqref{estimates2},  for each $t\in \r$,  we have  
\begin{eqnarray*}
\|u(t)\|_Y & \le& \| e^{-t\mathcal{A}}u_0\|_Y + \int_0^t \|e^{-(t-\tau)\A}B(f)(\tau)\|_Y d\tau \cr
&\le& e^{-\sigma t}\| u_0\|_Y + \alpha\int_0^t \left((t-\tau)^{-\theta} + 1\right)e^{-\beta(t-\tau)} \left\| f(\tau) \right\|_X d\tau \cr
&=& e^{-\sigma t}\| u_0\|_Y + \alpha\int_0^t \left(\tau^{-\theta} + 1\right)e^{-\beta\tau} \left\| f(t-\tau) \right\|_X d\tau \cr
&\le& e^{-\sigma t}\| u_0\|_Y + \alpha\int_0^\infty \left(\tau^{-\theta} + 1\right)e^{-\beta\tau} \left\| f(t-\tau) \right\|_X d\tau \cr
&\le& \| u_0\|_Y + \alpha\left(\beta^{\theta -1}\mathbf{\Gamma}(1-\theta)+\frac{1}{\beta}\right)\|f\|_{C_b(\r_+,X)}.
\end{eqnarray*}
The proof is completed when $M:=\alpha\left(\beta^{\theta -1}\mathbf{\Gamma}(1-\theta)+\dfrac{1}{\beta}\right)$.
\end{proof} 
\begin{remark}\label{Re34}
If we consider the linear equation 
\begin{equation}\label{whole}
\partial_t u + \mathcal{A}u = B(f)(t)
\end{equation}
on the whole time line-axis $t\in \mathbb{R}$, then we have the following definition of the mild solution
$$u(t) := \int_{-\infty}^te^{-(t-\tau)\mathcal{A}} B(f)(\tau) d\tau.$$
By the same way as in the proof of Lemma \ref{Thm:linear} we can prove that Equation \eqref{whole} has an unique mild solution such that
$$\norm{u(t)}_Y \leqslant M\|f\|_{C_b(\r,X)},$$
where $M=\alpha\left(\beta^{\theta -1}\mathbf{\Gamma}(1-\theta)+\frac{1}{\beta}\right)$.
\end{remark}
By using Lemma \ref{Thm:linear} we can define the solution operator $S: C_b(\r_+, X(\Gamma(T{\bf M})))\to C_b(\r_+,Y(\Gamma(T{\bf M})))$ of Equation \eqref{CauchyHeat1} as follows
\begin{align*}
S: C_b(\r_+,X(\Gamma(T{\bf M}))) &\rightarrow C_b(\r_+,Y(\Gamma(T{\bf M})))\cr
f&\mapsto S(f),
\end{align*}
where
\begin{equation}\label{SolOpe}
S(f)(t):= u_0+\int_0^t e^{-(t-\tau)\A}f(\tau)d\tau.
\end{equation}

%For simplicity of notation, we denote  $AP(\r,X):=AP(X)$ and $AAP(\r_+,X):=AAP(X)$.
Using Lemma \ref{Thm:linear} and Remark \ref{Re34} we prove a Massera-type principle to obtain the main theorem of this section. In particular,
we prove that if the external force $f$ is asymptotically almost periodic, then the mild solution of Equation \eqref{CauchyHeat} is also asymptotically almost periodic (see \cite[Theorem 3.3]{HuyXuan2020} for the case of periodic mild solution).
\begin{theorem}\label{pest}
Let $(M,g)$ be a $d$-dimensional real hyperbolic manifold and assume $p>d$. 
Let $f \in AAP(X(\Gamma(T{\bf M})))$. Then, the problem \eqref{CauchyHeat} has one and only one  asymptotically almost periodic mild solution $\hat{u}(t) \in AAP( Y(\Gamma(T{\bf M})))$ satisfying 
\begin{equation}\label{esper}
\|\hat{u}\|_{C_b(\r, Y(\Gamma(T{\bf M})))}\le \|u_0\|_p + M \|f\|_{C_b(\r_+,X(\Gamma(T{\bf M}))),}
\end{equation}
where $M:=\alpha \left(\beta^{\theta -1}\mathbf{\Gamma}(1-\theta)+\frac{1}{\beta}\right).$
\end{theorem}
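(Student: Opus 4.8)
The plan is to combine the explicit mild-solution representation \eqref{mild:linear1} with a Massera-type decomposition of the forcing term. Boundedness of the solution and the norm bound \eqref{esper} are already supplied by Lemma \ref{Thm:linear}, and uniqueness is immediate since the formula \eqref{mild:linear1} determines the mild solution from $u_0$ and $f$; hence the only remaining point is to show that this unique bounded solution is asymptotically almost periodic. I would start by writing $f = h + \varphi$ with $h \in AP(\r, X(\Gamma(T{\bf M})))$ and $\varphi \in C_0(\r_+, X(\Gamma(T{\bf M})))$, and analyse the responses to $h$ and to $\varphi$ separately.

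For the almost periodic forcing I would pass to the whole-line solution of Remark \ref{Re34},
\[
w(t) := \int_{-\infty}^t e^{-(t-\tau)\A} B(h)(\tau)\, d\tau ,
\]
and prove $w \in AP(\r, Y(\Gamma(T{\bf M})))$. The mechanism is linearity together with translation invariance: if $T$ is an $\epsilon$-translation number of $h$, then $w(\cdot + T) - w(\cdot)$ is precisely the whole-line solution driven by $h(\cdot + T) - h(\cdot)$, so the uniform estimate of Remark \ref{Re34} gives $\norm{w(t+T) - w(t)}_Y \leq M \sup_{s \in \r} \norm{h(s+T) - h(s)}_X \leq M\epsilon$ for every $t \in \r$. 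Thus each $\epsilon$-translation number of $h$ is an $M\epsilon$-translation number of $w$, and the relatively dense set of such numbers transfers unchanged, giving the almost periodicity of $w$.

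Next I would show that the half-line $h$-response differs from $w$ by a term vanishing at infinity and that the $\varphi$-response is itself in $C_0$. The half-line $h$-response is $e^{-t\A}u_0 + \int_0^t e^{-(t-\tau)\A} B(h)(\tau)\, d\tau$, whose difference from $w$ equals $e^{-t\A}u_0 - \int_{-\infty}^0 e^{-(t-\tau)\A} B(h)(\tau)\, d\tau$. The first summand decays by \eqref{estimates1}; in the second, for $\tau \leq 0$ one has $(t-\tau)^{-\theta} \leq t^{-\theta}$ and $e^{-\beta(t-\tau)} = e^{-\beta t} e^{\beta \tau}$, so \eqref{estimates2} bounds it by $\alpha \beta^{-1} \norm{h}_{C_b(\r, X)} (t^{-\theta} + 1) e^{-\beta t} \to 0$. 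For the $\varphi$-response $\int_0^t e^{-(t-\tau)\A} B(\varphi)(\tau)\, d\tau$ I would fix $\epsilon > 0$, choose $T_0$ with $\norm{\varphi(\tau)}_X < \epsilon$ for $\tau > T_0$, and split the integral at $T_0$: over $[0, T_0]$ the integral is dominated by $\alpha T_0 \norm{\varphi}_{C_b(\r_+, X)} ((t-T_0)^{-\theta} + 1) e^{-\beta(t-T_0)} \to 0$, while over $[T_0, t]$ the same convolution integral that produced the constant $M$ in Lemma \ref{Thm:linear} bounds the contribution by $M\epsilon$.

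Assembling the pieces, $\hat u$ is the sum of $w \in AP(\r, Y(\Gamma(T{\bf M})))$ and the two $C_0(\r_+, Y(\Gamma(T{\bf M})))$ terms identified above, so $\hat u \in AAP(\r_+, Y(\Gamma(T{\bf M})))$; uniqueness and the estimate \eqref{esper} are then inherited from the representation and Lemma \ref{Thm:linear}. I expect the main obstacle to be the $C_0$-estimates with the singular kernel $(t-\tau)^{-\theta}$: one must play the integrable algebraic singularity at $\tau = t$ off against the exponential factor $e^{-\beta(t-\tau)}$, and treat the lower tail $\int_{-\infty}^0$ and the cutoff at $T_0$ with enough care that both the near-singularity region and the region of large $|\tau|$ are controlled uniformly in $t$. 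By contrast, the transfer of almost periodicity should be a clean consequence of linearity and the translation-invariant bound of Remark \ref{Re34}.
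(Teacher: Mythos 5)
Your proposal is correct and follows essentially the same route as the paper's proof: decompose $f=h+\varphi$, show the whole-line response to $h$ is almost periodic by transferring $\epsilon$-translation numbers through the uniform bound of Remark \ref{Re34}, and verify that the remaining terms (the $\varphi$-response, the semigroup term, and the tail $\int_{-\infty}^0$) lie in $C_0(\r_+,Y(\Gamma(T{\bf M})))$. The only differences are cosmetic — you split the $\varphi$-integral at a fixed $T_0$ rather than at $t/2$ and bound the tail term pointwise instead of by a change of variables — and both variants are valid.
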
 
\def\xcal{\mathcal X}
\begin{proof} 
By using Assetion i) in Lemma \ref{Thm:linear}, it is fulfilled to show that the solution operator $S$ maps $AAP(\r, X(\Gamma(T{\bf M})))$ into $AAP(\r, Y(\Gamma(T{\bf M})))$. 

Indeed, for each $f\in AAP( \r_+, X(\Gamma(T{\bf M}))) $, there exist  $H \in AP(\r,X(\Gamma(T{\bf M})))$ and $ \Phi\in C_0(\r_+,X(\Gamma(T{\bf M})))$ such that  $F(t) =  H(t)+ \Phi(t)$ $(t \in \r_+)$. Using \eqref{SolOpe} we have
\begin{eqnarray*}
S(F)(t) &=& e^{-t\cal{A}}u_0 + \int_{0}^t  e^{-(t-\tau) \mathcal{A}} Bf(\tau)d\tau\cr
&=& e^{-t\cal{A}}u_0 + \int_{0}^t  e^{-(t-\tau) \mathcal{A}} B(H)(\tau)d\tau+\int_{0}^t  e^{-(t-\tau) \mathcal{A}}B(\Phi)(\tau)d\tau\cr
&=& e^{-t\cal{A}}u_0 + \int_{-\infty}^t  e^{-(t-\tau) \mathcal{A}} B(H)(\tau)d\tau + \int_{0}^t  e^{-(t-\tau) \mathcal{A}}B(\Phi)(\tau)d\tau \cr
&&- \int_{-\infty}^0 e^{-(t-\tau)\mathcal{A}}B(H)(\tau)d\tau \hbox{   for   } t>0.
\end{eqnarray*}
Putting
\begin{equation*}
\hat{S}(H)(t):=\int_{-\infty}^te^{-(t-\tau)\mathcal{A}} B(H) (\tau) d\tau
\end{equation*}
and
\begin{equation*}
\tilde{S}(\Phi)(t) := \int_{0}^t  e^{-(t-\tau) \mathcal{A}} B(\Phi) (\tau)d\tau.
\end{equation*}
As a consequence of Lemma \ref{Thm:linear} and Remark \ref{Re34} the above functions are bounded. We have
$$S(F)(t) = e^{-t\cal{A}}u(0) + \hat{S}(H)(t) + \tilde{S}(\Phi)(t) - \hat{S}(H)(0) , t\in\mathbb{R_+}.$$

We now prove that $ \hat{S}(H)\in AP(\r, Y(\Gamma(T{\bf M})))$. By considering the change of variable $\tau:= t- \tau$ we can re-write $\hat{S}(H)$ as
$$\hat{S}(H)(t) = \int_{0}^{\infty}  e^{-\tau \mathcal{A}} B(H)(t-\tau)d\tau.$$
Since $ H\in AP( \r, X(\Gamma(T{\bf M})))$, for each $ \epsilon  > 0$, there exists $l_{\epsilon}>0 $ such that every interval of length $l_{\epsilon}$ contains at least a number $T $ with the following property
$$\sup_{t \in \r } \| H(t+T)  - H(t) \|_X < \epsilon.$$
Therefore, we have
\begin{eqnarray*}
\left\|\hat{S}(H)(t+T) - \hat{S}(H)(t)\right\|_Y &=& \left\|\int_{0}^\infty e^{-\tau\mathcal{A}}B[H(t+T-\tau)- H(t-\tau)] d\tau \right\|_Y \cr
&\leqslant& \int_{0}^\infty \left\| e^{-\tau\mathcal{A}}B[H(t+T-\tau)- H(t-\tau)]\right\|_Y d\tau \cr
&\leqslant& \alpha \int_{0}^\infty (\tau^{-\theta}+1) e^{-\beta  \tau}d\tau \left\| H(\cdot+T) - H(\cdot) \right\|_{C_b(\r,X)}\cr
&\leqslant& M\left\| H(\cdot+T) - H(\cdot)\right\|_{C_b(\r,X)} \leqslant M\epsilon,
\end{eqnarray*}
for all $t \in \r_+$, where $M$ is determined as in Lemma \ref{Thm:linear}. Hence, $\hat{S}(H) \in AP( \r, Y(\Gamma(T{\bf M}))$ and the solution operator $\hat{S}$ preserves the almost periodic functions.

We remain to show that $ \tilde{S}(\Phi)(t)+e^{-t\mathcal{A}}u(0) - \hat{S}(H)(0)$ belongs to $C_0(\mathbb{R}_+, Y(\Gamma(T{\bf M})))$. To do this we will prove 
$$\mathop{\lim}\limits_{t \to \infty } \| \tilde{S}(\Phi)(t) \|_Y = 0 \hbox{  and  }  \lim_{t\to \infty}  \|e^{-t\mathcal{A}}u(0) - \hat{S}(H)(0)\|_Y = 0.$$
Indeed, we have
\begin{eqnarray*}
\tilde{S}(\Phi)(t) &=& \int_{0}^t  e^{-(t-\tau) \mathcal{A}} G(\Phi)(\tau)d\tau, t\in\mathbb{R_+} \cr
&=&\int_{0}^{t/2}  e^{-(t-\tau) \mathcal{A}}G(\Phi)(\tau)d\tau +\int_{t/2}^t  e^{-(t-\tau) \mathcal{A}}G(\Phi)(\tau)d\tau \cr
&=& S_1(\Phi)(t) +S_2(\Phi)(t).
\end{eqnarray*}
Using \eqref{estimates2} the first term $S_1(\Phi)(t)$ can be estimated as follows
\begin{eqnarray*}
\left\|S_1(\Phi)(t)\right\| _Y &\leqslant& \int_{0}^ {t/2}\left\| e^{-(t-\tau)\mathcal{A}}G(\Phi)(\tau)\right\|_Y d\tau\cr
&\leqslant& \int_{0}^ {t/2}\alpha \left((t-\tau)^{-\theta}+1\right) e^{-\beta(t-\tau)} \|\Phi\|_{C_b(\r_+,X)} d\tau\cr
&\leqslant& \int_{0}^ {t/2} \alpha \left[\left( \dfrac{2}{t}\right)^{\theta}+1\right] e^{-\beta(t-\tau)} \|\Phi\|_{C_b(\r_+,X)} d\tau \cr
&\leqslant& \alpha\left[\left( \dfrac{2}{t}\right)^{\theta}+1\right] \frac{1}{\beta} \left(e^{- \frac{\beta t}{2}}-e^{-\beta t}\right) \|\Phi\|_{C_b(\r_+,X)}. 
\end{eqnarray*}
This implies that $\mathop{\lim}\limits_{t \to \infty } \| S_1\Phi(t) \|_Y=0$. The second term $S_2(\Phi)(t)$ can be estimated as follows
\begin{eqnarray*}
\left\|S_2(\Phi)(t)\right\|_Y &\leqslant& \int_{t/2}^ {t}\left\| e^{(t-\tau)\mathcal{A}}B(\Phi)(\tau)\right\|_Y d\tau\cr
&\leqslant& \int_{t/2}^ {t}\alpha \left((t-\tau)^{-\theta}+1\right) e^{-\beta(t-\tau)} \|\Phi(\tau)\|_{C_b(\r_+,X)} d\tau. 
\end{eqnarray*}

Since $\lim_{t\to \infty}\norm{\Phi(t)}_X=0$, for all $\epsilon >0$ there exists $ t_0 $ large enough such that for all $t>t_0$, we have $\|\Phi(t)\|_X < \epsilon$. Therefore, 
$$\left\|S_2(\Phi)(t)\right\|_Y \leqslant \epsilon \int_{t/2}^ {t}\alpha \left((t-\tau)^{-\theta}+1\right) e^{-\beta(t-\tau)}  d\tau \leqslant \epsilon  M\,\,\,\, (t>2t_0),$$
where  $M$ given as in Lemma \ref{Thm:linear}. This implies that 
$$\mathop{\lim}\limits_{t \to \infty } \| S_2(\Phi)(t) \|_Y = 0.$$
Therefore
$$\mathop{\lim}\limits_{t \to \infty } \| \tilde{S}(\Phi)(t) \|_Y = 0.$$

Now we prove that $\lim_{t\rightarrow \infty} \left\| e^{-t\mathcal{A}}u(0) - \hat{S}(H)(0) \right\|_Y = 0$. Thanks to \eqref{estimates1} we have that
$$ \left\| e^{-t\mathcal{A}}u(0) \right\|_Y \leqslant e^{- \sigma t} \left\|u(0) \right\|_Y.$$
Then $\mathop{\lim}\limits_{t \to \infty } \left\| e^{-t\mathcal{A}}u(0) \right\|_Y = 0$.
On the other hand
\begin{eqnarray*}
\left\|\hat{S}(H)(0) \right\|_Y &\leqslant& \int_{-\infty}^0 \left\| e^{-(t-\tau)\mathcal{A}} B(H)(\tau) \right\|_Y d\tau \cr
&\leqslant& \int_{t}^\infty \left\| e^{-\tau\mathcal{A}} B(H) (t-\tau) \right\|_Y  d\tau\cr
&\leqslant& \int_{t}^\infty \alpha \left(\tau^{-\theta} +1 \right)e^{-\beta\tau}d\tau\left\|H\right\|_{C_b(\r_+,X)} \cr
&\leqslant& \int_{t}^\infty \alpha \left( t^{-\theta} +1 \right)e^{-\beta\tau}d\tau\left\|H\right\|_{C_b(\r_+,X)} \cr
&\leqslant& M\left\|H\right\|_{C_b(\r_+,X)},
\end{eqnarray*}
where $M$ is given as in Lemma \ref{Thm:linear}. This inequality leads to $\mathop{\lim}\limits_{t \to \infty } \left\|\hat{S}(H)(0) \right\|_Y =0$. Therefore $ \tilde{S}(\Phi)(t)+e^{-t\mathcal{A}}u_0 - \hat{S}(H)(0)$ belongs to $C_0(\r_+, Y(\Gamma(T{\bf M})))$. Our proof is now completed.
\end{proof}

\subsection{The semi-linear equation}\label{3.2}
We now prove the existence and the stability of the asymptotically almost periodic mild solution in the semilinear case.
Recall that our vectorial equations on the real hyperbolic manifold $({\bf M},g)$ are described by 
\begin{align}\label{CauchyNSE}
\begin{cases}
\partial_t u + \mathcal{A}u = BG(u,t),\\
u|_{t=0}=u_0 \in Y,
\end{cases}
\end{align}
where $G(u,t)$ satisfy Assumption \ref{AssG}.

We define the mild solution to the equations \eqref{CauchyNSE} as follows
\begin{equation}\label{MildS}
u(t) = e^{-t\mathcal{A}}u_0 + \int_0^t e^{-(t-\tau)\mathcal{A}} BG(u(\tau),\tau) d\tau.
\end{equation}
The main theorem of this section is
\begin{theorem}\label{thm2.20}
Let $({\bf M},g)$ be a $d$-dimensional real hyperbolic manifold. Suppose that $G$ satisfy Assumption \ref{AssG}. Then, the following assertions hold:
\begin{itemize}
\item[(i)] If the norm $\|u_0\|_Y$, $\sup_{t>0}\norm{G(t,0)}_X$ and $L$ are sufficiently small, 
the equation \eqref{CauchyNSE} has one and only one asymptotically almost periodic mild solution $\hat{u}$ in $C_b(\r_+, Y)$ satisfying that $\norm{\hat{u}}_{C_b(\r,Y)}\leqslant \rho$.
\item[(ii)] The asymptotically almost periodic mild solution  $\hat{u}$ of the equation \eqref{CauchyNSE} 
is exponentially stable in the sense that for any other mild solution $u\in C_b(\r_+, Y)$ to \eqref{CauchyNSE} such that $\|u(0)-\hat{u}(0)\|_{Y}$ is small enough, we have 
\begin{equation}\label{stasol}
\|u(t)-\hat{u}(t)\|_{Y}\le {C_\gamma}{e^{-\gamma t}}\|u(0)-\hat{u}(0)\|_{Y} \hbox{ for all }t>0,
\end{equation}
here $\gamma$ is a positive constant satisfying 
$0<\gamma<\min\{\frac{\sigma}{2}, \sigma - \left(\frac{\alpha L \sigma\mathbf{\Gamma}(1-\theta)}{\sigma-2\alpha L}\right)^{\frac{1}{1-\theta}}\}$, and $C_\gamma$ is a constant independent of $u$ and $\hat{u}$. 
\end{itemize}
\end{theorem}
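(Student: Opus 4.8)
The plan is to prove the two assertions by distinct mechanisms: a Banach fixed-point scheme for the existence and uniqueness in (i), and a weighted (cone-type) singular integral inequality for the exponential stability in (ii). Throughout I would rely on the linear a priori estimate of Lemma~\ref{Thm:linear} and the Massera-type statement of Theorem~\ref{pest}, together with the smoothing bounds \eqref{estimates1}, \eqref{estimates2} and Assumption~\ref{AssG}.

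For assertion (i), I would realize the mild solution \eqref{MildS} as a fixed point of
$$\Phi(u)(t) := e^{-t\mathcal{A}}u_0 + \int_0^t e^{-(t-\tau)\mathcal{A}}BG(u(\tau),\tau)\,d\tau$$
on the closed ball $B_\rho = \{u \in AAP(\r_+, Y(\Gamma(T{\bf M}))) : \|u\|_{C_b(\r_+,Y)} \leq \rho\}$. The first step is that $\Phi$ lands in $AAP$: decomposing $G(u,\cdot) = \big(G(u,\cdot)-G(0,\cdot)\big)+G(0,\cdot)$ and using the local Lipschitz bound, the forcing $G(u,\cdot)$ is asymptotically almost periodic whenever $u$ is, so Theorem~\ref{pest} applied to the linear problem yields $\Phi(u)\in AAP(\r_+,Y)$. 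The second step is the self-mapping property: by the bound in Lemma~\ref{Thm:linear} (equivalently Theorem~\ref{pest}),
$$\|\Phi(u)\|_{C_b(\r_+,Y)} \leq \|u_0\|_Y + M\big(L\rho + \sup_{t\geq 0}\|G(0,t)\|_X\big),$$
so choosing $\|u_0\|_Y$, $\sup_{t}\|G(0,t)\|_X$ and $L$ small enough (in particular $ML<1$) forces the right-hand side below $\rho$. The third step is contraction: for $u_1,u_2\in B_\rho$ the same linear estimate and the Lipschitz bound give $\|\Phi(u_1)-\Phi(u_2)\|_{C_b}\leq ML\|u_1-u_2\|_{C_b}$, a strict contraction when $ML<1$. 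Banach's theorem then yields the unique $\hat u\in B_\rho$ satisfying the bound \eqref{esper}.

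For assertion (ii), set $w:=u-\hat u$. Subtracting the two mild formulations of \eqref{CauchyNSE} and applying \eqref{estimates1} to the initial-data term, \eqref{estimates2} and the local Lipschitz bound to the integral term (valid as long as $u(\tau),\hat u(\tau)\in B_\rho$), I obtain the scalar inequality
$$\|w(t)\|_Y \leq e^{-\sigma t}\|w(0)\|_Y + \alpha L\int_0^t\big((t-\tau)^{-\theta}+1\big)e^{-\beta(t-\tau)}\|w(\tau)\|_Y\,d\tau.$$
The idea is to absorb the decay by passing to $\psi(t):=e^{\gamma t}\|w(t)\|_Y$, which is finite on every $[0,T]$ because $w\in C_b(\r_+,Y)$. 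Since $\gamma<\sigma<\beta$, the substitution and the bound $e^{-(\beta-\gamma)s}\leq e^{-(\sigma-\gamma)s}$ give
$$\psi(t) \leq \|w(0)\|_Y + \alpha L\int_0^t\big((t-\tau)^{-\theta}+1\big)e^{-(\sigma-\gamma)(t-\tau)}\psi(\tau)\,d\tau.$$
Evaluating the kernel over $[0,\infty)$ produces $k:=\alpha L\big((\sigma-\gamma)^{\theta-1}\mathbf{\Gamma}(1-\theta)+(\sigma-\gamma)^{-1}\big)$, and the constraints $\gamma<\tfrac{\sigma}{2}$ (so $(\sigma-\gamma)^{-1}<2/\sigma$) and $\gamma<\sigma-\big(\tfrac{\alpha L\sigma\mathbf{\Gamma}(1-\theta)}{\sigma-2\alpha L}\big)^{1/(1-\theta)}$ are exactly what force $k<1$. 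Taking $\sup_{[0,T]}$ then gives $\sup_{[0,T]}\psi\leq\|w(0)\|_Y/(1-k)$ uniformly in $T$, hence $\|w(t)\|_Y\leq C_\gamma e^{-\gamma t}\|w(0)\|_Y$ with $C_\gamma=1/(1-k)$, which is \eqref{stasol}.

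The main obstacle I anticipate is twofold. First, verifying that $\Phi$ preserves asymptotic almost periodicity is not automatic from Assumption~\ref{AssG} alone: one needs the explicit time-dependence of $G$ to be asymptotically almost periodic (as it is for the Navier--Stokes and semilinear heat applications of Section~\ref{4}), so that the superposition $G(u(\cdot),\cdot)$ is $AAP$ and Theorem~\ref{pest} applies. Second, and more delicate, is pinning down the admissible range of $\gamma$: the singular factor $(t-\tau)^{-\theta}$ forces the use of the $\mathbf{\Gamma}$-function bound rather than a naive Gronwall estimate, and the interplay of the two terms of the kernel is what yields the precise threshold $\sigma-\big(\tfrac{\alpha L\sigma\mathbf{\Gamma}(1-\theta)}{\sigma-2\alpha L}\big)^{1/(1-\theta)}$, where the required $\sigma-2\alpha L>0$ re-imports the smallness of $L$. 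A minor technical point is ensuring $u$ does not leave $B_\rho$ so that the local Lipschitz estimate stays applicable; this follows by a standard continuity (bootstrap) argument once $\|w(0)\|_Y$ is small.
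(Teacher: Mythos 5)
Your proposal is correct, and part (i) follows essentially the same route as the paper: the map $\Phi$ sending $v$ to the mild solution of the linearized problem with forcing $G(v,\cdot)$, invariance of the ball $\B^{AAP}_\rho$ via the estimate $\|\Phi(v)\|\le\|u_0\|_Y+M(L\rho+\sup_t\|G(0,t)\|_X)$, contraction with constant $ML<1$, and Banach's fixed point theorem. Your caveat that Assumption~\ref{AssG} alone does not guarantee that $t\mapsto G(v(t),t)$ is asymptotically almost periodic is a fair and worthwhile observation; the paper passes over this point silently and, as you note, it is the concrete structure of $G$ in the applications that makes Theorem~\ref{pest} applicable.

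Where you genuinely diverge is part (ii). The paper invokes the cone inequality theorem of Daleckii--Krein twice: first with the unweighted operator $A$ (requiring $\|A\|<1$, i.e. smallness of $L$) to pass from the integral inequality for $\phi(t)=\|u(t)-\hat u(t)\|_Y$ to the exact solution $\psi$ of $\psi=A\psi+z$, and then with the weighted operator $D$ acting on $w(t)=e^{\gamma t}\psi(t)$ to conclude $\|w\|_\infty\le(1-\|D\|)^{-1}\|u(0)-\hat u(0)\|_Y$. You instead weight the inequality directly, work on finite intervals $[0,T]$ so that $\sup_{[0,T]}e^{\gamma t}\|w(t)\|_Y$ is finite a priori, and absorb the convolution term using the kernel bound $k=\alpha L\left((\sigma-\gamma)^{\theta-1}\mathbf{\Gamma}(1-\theta)+(\sigma-\gamma)^{-1}\right)<1$, which the two constraints on $\gamma$ indeed guarantee (your verification of $k<1$ matches the paper's bound $\|D\|<1$). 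This is a legitimate and more elementary argument: the restriction to $[0,T]$ is exactly the device that makes the direct absorption rigorous where a naive global sup would be circular, and it dispenses with the cone machinery entirely. What the paper's two-step route buys is only that one never needs the a priori finiteness of the weighted sup, since the comparison function $\psi$ is constructed as the exact fixed point; the quantitative outcome, including the constant $C_\gamma=(1-k)^{-1}$, is the same. Your closing remark about keeping $u$ inside $B_\rho$ via a continuity argument addresses a point the paper also only gestures at, so the proposal is complete.
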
 

\begin{proof}
(i): Putting
\begin{equation}\label{bro}
\B^{AAP}_\rho:=\{v\in C_b(\r_+, Y): v \hbox{  is asymptotically almost periodic and  } \|v\|_{C_b(\r_+,Y)} \le \rho \}.
\end{equation}

We consider the equation 
\begin{equation}\label{ns1}
    \partial_tu  + \A u  = BG(v,t).
 \end{equation} 
Applying Lemma \ref{Thm:linear} for the right-hand side  $G(u,t)$ instead of $f(t)$, we obtain that given $v\in \B^{AAP}_\rho$, there exists a unique asymptotically almost periodic mild solution $u\in AAP(\r_+, Y)$  to \eqref{ns1} satisfying
\begin{eqnarray}\label{ephi}
\|u\|_{\infty,Y}&\le& \| u_0\|_Y + M\sup_{t>0}\|G(v(t),t)\|_{X} \cr
&\le& \| u_0\|_Y + M \left( \sup_{t>0}\|G(v(t),t)-G(0,t)\|_{X} + \sup_{t>0}\norm{G(0,t)}_X \right)\cr
&\le& \| u_0\|_Y + M \left(L \sup_{t>0}\norm{v(t)}_Y + \sup_{t>0}\norm{G(0,t)}_X \right)\cr
&\le& \| u_0\|_Y + M \left(L\rho + \sup_{t>0}\norm{G(0,t)}_X \right).
\end{eqnarray}
By Formula \eqref{mild:linear1} with $G(v,t)$ instead of $f(t)$, we have
\begin{equation}\label{defphi1}
u(t) = e^{-t\mathcal{A}}u_0 + \int_0^t e^{-(t-\tau)\mathcal{A}} G(v(\tau),\tau)  d\tau.
\end{equation}
Setting
\begin{equation}\label{defphi}
\begin{split}
\Phi(v)&:=u
\end{split}\,.
\end{equation}
If $\| u_0\|_Y$, $\rho$, $L$ and $\sup_{t>0}\norm{G(0,t)}_X$ are small enough, the inequality \eqref{ephi} leads to $\Phi(\B^{AAP}_\rho) \subset \B^{AAP}_\rho$. On the other hand, for $v_1, v_2\in \B^{AAP}_\rho$, the function  
$z:=\Phi(v_1)-\Phi(v_2)$ becomes the unique strong mild solution to the equation 
$$\partial_tz  + \A z   = B[G(v_1,t) - G(v_2,t)].$$
By using again Lemma \ref{Thm:linear} we have
\begin{eqnarray}\label{ephi3}
\|\Phi(v_1)-\Phi(v_2)\|_{C_b(\r_+,Y)}&\le& ML\|(v_1-v_2)\|_{C_b(\r_+,X)}.
\end{eqnarray}
Therefore, $\Phi $ is a contraction from $\B^{AAP}_\rho$ into $\B^{AAP}_\rho$ if $L$ is sufficiently small. For these values of  $\rho$, $L$, $\|u_0 \|_Y$ and $\sup_{t>0}\norm{G(0,t)}_X$ the fixed point argument shows that there exists a unique point $\hat{u} \in \B^{AAP}_\rho$ such that $\Phi(\hat{u})=\hat{u}$.  On account of the above definition of $\Phi$, this function $\hat{u}$ is the asymptotically almost periodic mild solution to \eqref{CauchyNSE} in the ball $\B^{AAP}_\rho$ on the real hyperbolic manifold ${\bf M}$ . 
 
\medskip
(ii): We will prove the exponential stability of the asymptotically almost periodic mild solution $\hat{u}$ to Equation \eqref{CauchyNSE} by using  the cone inequality theorem which we now recall. To this purpose,  we first recall the  notion of 
a cone in a Banach space:
A closed subset $\cal K$ of a Banach space $W$ is called a {\it cone} if 
\begin{enumerate}
\item[$\bullet$] $x_0\in \cal K\Longrightarrow\lambda x_0\in \cal K$ for all $\lambda\ge 0$;
\item[$\bullet$] $x_1,\ x_2\in \cal K \Longrightarrow x_1+ x_2\in \cal K$;
\item[$\bullet$] $\pm x_0\in \cal K\Longrightarrow x_0=0$. 
\end{enumerate}
Then, let a cone $ \cal K$ be given in the Banach space $W$. For $x, y\in W$ we then write
$x\le y$ if $y-x\in \cal K$. 

If the cone $ \cal K$ is invariant under a linear operator $A$, 
then it is easy to see that $A$ preserves the inequality, i.e., $x\le y$ implies $Ax\le Ay$. 
Now, the following cone inequality is proven in \cite[Theorem I.9.3]{DalKre}.

{\it Suppose that
 $\cal K$ is a cone in a Banach space $W$ such that $\cal K$ is invariant under a bounded linear operator  $A\in \mathcal{L}(W)$ having spectral radius $r_A<1$. 
 For a vector $x\in W$ satisfying 
\begin{equation}\label{ci} 
x\le Ax+z\hbox{ for some given }z\in W,
\end{equation}
 we have that it also satisfies the estimate $x\leqslant y$, where $y\in W$ is a solution of the equation $y=Ay+z$.}

To prove the stability of the mild solution $\hat u$ we
let $u(t)$ be  any bounded solution of Equation 
\eqref{MildS} corresponding to initial  value  
$u_0:=u(0)\in B_{\frac{\rho}{2}}:=\{v\in Y : \|v\|_{Y}\le \frac \rho2\}$. 
Then, we have that
\begin{equation}\label{DE}
u(t)-\hat u(t)=e^{-t\A}(u(0)-\hat u(0))+
\int_{0}^t e^{-(t-\tau)\A}B[G(\tau, u)- G(\tau,\hat{u})]d\tau
\end{equation}
for $t\geqslant 0$.

This follows that
\begin{eqnarray*}
\|u(t)-\hat u(t)\|_{Y}&\le& e^{-\sigma t}\|u(0)-\hat u(0)\|_{Y} \cr
&&+ \int_{0}^t \alpha\left((t-\tau)^{-\theta} +1\right)e^{-\beta(t-\tau)}\|G(\tau,u(\tau))-G(\tau,\hat u(\tau))\|_X d\tau\cr
&\le& e^{-\sigma t}\|u(0)-\hat u(0)\|_{Y} \cr
&&+ L \int_{0}^t \alpha\left( (t-\tau)^{-\theta}+1\right)e^{-\beta(t-\tau)}\|u(\tau)-\hat{u}(\tau)\|_Yd\tau \hbox{ for  $t\ge 0$}
\end{eqnarray*}
(here we use  the fact that $0<\sigma<\beta$).

Put $\phi(t)=\|u(t)-\hat u(t)\|_{Y}$. 
Then $\sup_{t\ge 0}\phi(t)<\infty$, and 
\begin{equation}\label{phi1}
\phi(t)\le e^{-\sigma t}\|u(0)-\hat u(0)\|_Y+ 
L \int_{0}^t \alpha\left((t-\tau)^{-\theta}+1\right)e^{-\sigma(t-\tau)}\phi(\tau)d\tau\hbox{ for  }t\ge 0.
\end{equation}
We will use the cone inequality theorem applying to Banach space $W:=L^\infty([0,\infty))$  
which is the space of real-valued functions defined and essentially bounded on $[0,\infty)$ 
(endowed with the sup-norm denoted by $\|\cdot\|_\infty$) with the cone $\cal K$ being the set of all (a.e.) nonnegative functions.  We now consider the linear operator 
$A$ defined for $h\in W$ by
$$(Ah)(t)=L \int_{0}^t \alpha\left((t-\tau)^{-\theta}+1\right)e^{-\sigma(t-\tau)}h(\tau)d\tau\hbox{ for  }t\ge 0.$$ 
Then, we have that
$$\sup_{t\ge 0}|(Ah)(t)|=\sup_{t\ge 0}L \int_{0}^t \alpha\left((t-\tau)^{-\theta}
+ 1\right) e^{-\sigma(t-\tau)}|h(\tau)|d\tau$$
$$\le L\alpha\left(\sigma^{\theta-1}\mathbf{\Gamma}(1-\theta)
+\frac{1}{\sigma}\right)\|h\|_{\infty}$$
where $\mathbf{\Gamma}$ is the gamma function.

Therefore, $A\in {\cal L}(L^\infty([0,\infty)))$ and 
$\|A\|\le L \alpha\left(\sigma^{\theta-1}\mathbf{\Gamma}(1-\theta)
+\frac{1}{\sigma}\right)<1$ for $L$ being small enough. Note that if 
$\|u(0)-\hat{u}(0)\|_Y$ is small enough, by the same way as in the proof of Assertion i), we can show that the solution $u(t)-\hat{u}(t)$ of \eqref{DE} exists and unique in the ball $\B_\rho$ of $C_b(\r_+,Y)$. Therefore, we have that $\phi$ belongs to $\cal K$.
  
Obviously, $A$ leaves the cone $\cal K$ invariant.
The inequality \eqref{phi1} can now be rewritten as
$$\phi\le A\phi+z\hbox{ for }z(t)=e^{-\sigma t}\|u(0)-\hat{u}(0)\|_Y;\; t\ge 0.$$
Hence, by cone inequality theorem \ref{ci} we obtain that $\phi\le \psi$, where
$\psi$ is a solution in $L^\infty([0,\infty))$ of the equation $\psi= A\psi+z$ which can be rewritten as
\begin{equation}\label{psi}
\psi(t)= e^{-\sigma t}\|u(0)-\hat{u}(0)\|_Y+ 
L \int_{0}^t \alpha\left( (t-\tau)^{-\theta}
+{1}\right) e^{-\sigma(t-\tau)}\psi(\tau)d\tau\hbox{ for  }t\ge 0.
\end{equation}
In order to estimate $\psi$,
for $0<\ga<\min\{\frac{\sigma}{2}, \sigma - \left(\frac{\alpha L \sigma\mathbf{\Gamma}(1-\theta)}{\sigma-\alpha L}\right)^{\frac{1}{1-\theta}}\}$ 
 we set $w(t):=e^{\ga t}\psi(t)$,\ $t\ge 0$. Then, by Equality \eqref{psi} 
we obtain that
\begin{equation}\label{cone}
w(t)=e^{-(\sigma-\ga)t}\|u(0)-\hat{u}(0)\|_Y+ 
L \int_{0}^t \alpha\left((t-\tau)^{-\theta}
+1\right) e^{-(\sigma-\ga)(t-\tau)}w(\tau)d\tau\hbox{ for  }t\ge 0.
\end{equation}
We next consider the linear operator 
$D$ defined for $\varphi\in L^\infty([0,\infty))$ by
$$(D\varphi)(t)= L\int_{0}^t \alpha\left( (t-\tau)^{-\theta}+1\right)e^{-(\sigma-\ga)(t-\tau)}\varphi(\tau)d\tau\hbox{ for  }t\ge 0.$$
Again, we can estimate 
\begin{eqnarray*}
\sup_{t\ge 0}|(D\varphi)(t)|&= & \sup_{t\ge 0}L\int_{0}^t \alpha\left((t-\tau)^{-\theta}
+1\right)e^{-(\sigma-\ga)(t-\tau)}|\varphi(\tau)|d\tau\cr
&\le &  L\alpha\left((\sigma-\ga)^{\theta-1}\mathbf{\Gamma}(1-\theta)
+\frac{1}{\sigma-\gamma}\right)\|\varphi\|_\infty.
\end{eqnarray*}
Therefore, $D\in {\cal L}(L^\infty([0,\infty)))$ and 
$\|D\|\le L\alpha\left((\sigma-\ga)^{\theta-1}\mathbf{\Gamma}(1-\theta)
+\frac{1}{\sigma-\gamma}\right).$ 
The Equation \eqref{cone} can now be rewritten as
$$w= Dw+z\hbox{ for }z(t)= e^{-(\sigma-\ga)t}\|u(0)-\hat{u}(0)\|_Y,\; t\ge 0.$$
Since  $0<\ga<\min\{\frac{\sigma}{2}, \sigma - \left(\frac{\alpha L \sigma\mathbf{\Gamma}(1-\theta)}{\sigma-2\alpha L}\right)^{\frac{1}{1-\theta}}\}$
we obtain 
$$ \|D\|\le   \alpha L \left((\sigma-\ga)^{\theta-1}\mathbf{\Gamma}(1-\theta)
+\frac{2}{\sigma}\right) <1.$$
Therefore, the equation $w=Dw+z$ is uniquely solvable in $L^\infty([0,\infty))$, 
and its solution is $w=(I-D)^{-1}z$. Hence,  we  obtain that
\begin{eqnarray*}
\|w\|_\infty &=& \|(I-D)^{-1}z\|_\infty\le\|(I-D)^{-1}\|\|z\|_\infty\le 
\frac{1}{1-\|D\|}\|u(0)-\hat{u}(0)\|_p\cr
&\le& \dfrac{1}{1- \alpha L \left((\sigma-\ga)^{\theta-1}\mathbf{\Gamma}(1-\theta)
+\frac{2}{\sigma}\right)}\|u(0)-\hat{u}(0)\|_Y:=C_\ga\|u(0)-\hat{u}(0)\|_Y.
\end{eqnarray*}
This yields that
$$w(t)\le C_\ga\|u(0)-\hat{u}(0)\|_Y \hbox{ for }t\ge 0.$$
Hence, $\psi(t)=e^{-\ga t}w(t)\le C_\ga e^{-\ga t}\|u(0)-\hat{u}(0)\|_Y$. Since
$\|u(t)- \hat u(t))\|_Y=\phi(t)\le \psi(t)$, we obtain that
$$\|u(t)- \hat u(t)\|_Y\le C_\ga e^{-\ga t}\|u(0)-\hat{u}(0)\|_Y \hbox{ for }t\ge 0.$$
\end{proof}

\section{Applications}\label{4}
\subsection{Navier-stokes equation}\label{4.1}
We follow \cite{Pi,Tay} to express the Navier-Stokes equation on the real hyperbolic manifold ${\bf M}$. The imcompressible Navier-Stokes equations on ${\bf M}$ are described by the following equation
\begin{align}\label{Cauchy}
\begin{cases}
\partial_t u + \nabla_u u + \nabla p = Lu + \mathrm{div}F \\
\mathrm{div} u = 0,\\
u|_{t=0} = u_0 \in \Gamma(T{\bf M}),
\end{cases}
\end{align}
where $u=u(x,t)$ is considered as a vector field on ${\bf M}$, i.e., 
$u(\cdot,t)\in \Gamma(T{\bf M})$ and $p=p(x,t)$ is the pressure field, $L$ is the stress tensor, $\dive F$ is the external force.

Since $\mathrm{div}u=0$ it follows that $\nabla_uu = \mathrm{div} (u\otimes u)$. Therefore we obtain the equivalent system of \eqref{Cauchy} as follows
\begin{align}\label{Cauchy'}
\begin{cases}
\partial_t u + \mathrm{div}(u\otimes u) + \nabla p = Lu + \mathrm{div}F \\
\mathrm{div} u = 0,\\
u|_{t=0} = u_0 \in \Gamma(T{\bf M}).
\end{cases}
\end{align}

By the Kodaira-Hodge decomposition, an $L^2$-form can be decomposed on $M $ as 
$$L^2(\Gamma(T^*({\bf M}))) = \overline{\mathrm{Image}\, d} \oplus \overline{\mathrm{Image}\, d^*} \oplus \mathcal{H}^1({\bf M}) \, ,$$
where $\mathcal{H}^1({\bf M})$ is the space of $L^2$ harmonic $1-$forms. Taking the divergence of Equation \eqref{Cauchy'} and noting that $\mathrm{div}(\overrightarrow{\Delta}u) = \mathrm{div}(-(d-1)u) = 0$ if $\mathrm{div}\, u=0$, we get
$$\Delta_g p + \mathrm{div}[\nabla_uu] = 0,$$
where $\Delta_g$ is the Laplace-Beltrami operator on ${\bf M}$,i.e
$$\Delta_g  = \dive\mathrm{grad} = \frac{1}{\sqrt{|g|}}\frac{\partial}{\partial x^j}\left( \sqrt{|g|}g^{ij}\frac{\partial}{\partial x^i}  \right).$$

\def\dive{\operatorname{div}}
We need to choose a solution in $L^p$ of this elliptic equation. Since the spectral of $\Delta_g$ on the hyperbolic manifold ${\bf M} = \mathbb{H}^d(\mathbb{R})$ is $\left[ \frac{(d-1)^2}{4},\infty\right)$ which does not contain $0$, we have that $\Delta_g \,: \, W^{2,r} \rightarrow L^r$ is an isomorphism for $2\leq r<\infty$. This leads to
$$\mathrm{grad}p = \mathrm{grad}(-\Delta_g)^{-1}\mathrm{div}[\nabla_uu].$$
By putting  
$\mathbb{P}:= I + \mathrm{grad}(-\Delta_g)^{-1} \mathrm{div}$ ($\mathbb{P}$ is called the Kodaira-Hodge operator), we can get rid of the 
pressure term $p$ and then obtain from \eqref{Cauchy'} that 
\begin{align}\label{DivNavierStokes}
\begin{cases}
\partial_t u = Lu -\mathbb{P}\dive (u\otimes u) + \mathbb{P}\dive F,\\
\dive u = 0,\\
u|_{t=0} = u_0 \in \Gamma(T{\bf M}),
\end{cases}
\end{align}
where $L= -(d-1)+ \overrightarrow{\Delta}$.

Since the $L^p$-boundedness of Riesz transforms on a real hyperbolic manifold  (see \cite{Loho}), the operator $\mathbb{P}$ is bounded. Moreover, we would like to remark that $\mathbb{P}$ is commuted with $\overrightarrow{\Delta}$, then $e^{-t\cal{A}}$ in the real hyperbolic spaces. Therefore, we can eliminate $\mathbb{P}$ in the sense of the application of abstract results in Section 3.

Putting $B=\mathbb{P}\dive$, $G(u,t) = (u\otimes u + F)(t)$, $X(\Gamma(T{\bf M}))=L^{p/2}(\Gamma(T{\bf M}\otimes T{\bf M}))$, $Y(\Gamma(T{\bf M}))=L^p(\Gamma(T{\bf M}))$ and
$$0<\theta = \frac{d}{2}\left( \frac{1}{p} + \frac{1}{d} \right)<1 \hbox{  with  } p>d,$$
$$\sigma = d-1 + \gamma_{p,p}, \, \beta = d-1 + \frac{\gamma_{p,p}+ \gamma_{p/2,p}}{2}.$$
We have that $G$ satisfy Assumption \ref{AssG} with $L=2\rho$. Indeed, using Holder's inequality we have for $u_1,\, u_2\in B_\rho$:
\begin{eqnarray*}
\norm{G(u_1,t)-G(u_2,t)}_X &=& \norm{u_1\otimes u_1 - u_2\otimes u_2}_X \cr
&\leq& \norm{u_1\otimes (u_1-u_2)}_X + \norm{(u_1-u_2)\otimes u_2}\cr
&\leq& (\norm{u_1}_Y + \norm{u_2}_Y)\norm{u_1-u_2}_Y\cr
&\leq& 2\rho \norm{u_1-u_2}_Y.
\end{eqnarray*}
$$\norm{G(0,t)}_X = \norm{F(t)}_X<\infty$$
if we assume that $F\in C_b(\r_+,X)$.

Therefore, we can apply Theorem \ref{thm2.20} we obtain the following result.
\begin{theorem}
Let $({\bf M},g)$ be a $d$-dimensional real hyperbolic manifold. Suppose that $F\in C_b(\r_+, L^{p/2}(\Gamma(T{\bf M}\otimes T{\bf M}))$ with $p>d$.
Then, the following assertions hold true.
\begin{itemize}
\item[(i)] If the norm $\|u_0\|_{p}$ and $\|F\|_{\infty,\frac{p}{2}}$ are sufficiently small, 
Equation \eqref{DivNavierStokes} has one and only one asymptotically almost periodic mild solution $\hat{u}$ on a small ball of $C_b(\r_+, L^p(\Gamma(T{\bf M})))$.

\item[(ii)] For any other mild solution $u\in C_b(\r_+, L^p(\Gamma(T{\bf M})))$ to \eqref{Cauchy'} such that $\|u(0)-\hat{u}(0)\|_p$ is small enough, we have
\begin{equation*}
\|u(t)-\hat{u}(t)\|_p \le {C_\gamma}{e^{-\gamma t}}\|u(0)-\hat{u}(0)\|_p \hbox{ for all }t>0,
\end{equation*}
here $\gamma$ is a positive constant satisfying 
$0<\gamma<\min\{\frac{\sigma}{2}, \sigma - \left(\frac{2\alpha \rho \sigma\mathbf{\Gamma}(1-\theta)}{\sigma-4\alpha \rho}\right)^{\frac{1}{1-\theta}}\}$, and $C_\gamma$ is a constant independent of $u$ and $\hat{u}$.
\end{itemize}
Here, we denote that $\norm{.}_{p} = \norm{.}_{L^p(\Gamma(T{\bf M})))}$, $\norm{.}_{\infty,\frac{p}{2}} = \norm{.}_{L^{p/2}(C_b(\r_+,\Gamma(T{\bf M}\otimes T{\bf M})))}$.
\end{theorem}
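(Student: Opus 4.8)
The plan is to realize Equation \eqref{DivNavierStokes} as an instance of the abstract problem \eqref{CauchyNSE} and then invoke Theorem \ref{thm2.20}; thus the whole argument reduces to checking that the Navier--Stokes data fit the abstract hypotheses, namely the two semigroup estimates \eqref{estimates1}--\eqref{estimates2} and Assumption \ref{AssG}. First I would fix the identifications $B=\mathbb{P}\dive$, $G(u,t)=(u\otimes u+F)(t)$, $X=L^{p/2}(\Gamma(T{\bf M}\otimes T{\bf M}))$ and $Y=L^p(\Gamma(T{\bf M}))$, and record the constants $\theta=\frac{d}{2}(\frac1p+\frac1d)$, $\sigma=d-1+\gamma_{p,p}$ and $\beta=d-1+\frac{\gamma_{p,p}+\gamma_{p/2,p}}{2}$, together with the value $\alpha$ coming from the dispersive constant.

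Next I would derive \eqref{estimates1} and \eqref{estimates2} from Lemma \ref{estimates}. The plain bound \eqref{estimates1} is the case $p=q$ of assertion i), for which $[h_d(t)]^{1/p-1/p}=1$ and the exponential factor is exactly $e^{-\sigma t}$. For \eqref{estimates2} I would start from assertion iii) with source exponent $p/2$ and target exponent $p$; this produces the factor $[h_d(t)]^{2/p-1/p+1/d}=[h_d(t)]^{1/p+1/d}$, which is dominated by $\alpha(t^{-\theta}+1)$ with the stated $\theta$ since $h_d(t)=C\max(t^{-d/2},1)$, together with the decay rate $\beta$. The one point requiring care here is the Leray--Hodge projector in $B=\mathbb{P}\dive$: I would use that $\mathbb{P}$ is bounded on $L^p$ (the $L^p$-boundedness of the Riesz transforms on ${\bf M}$ cited above) and commutes with $e^{-t\A}$, so that $\|e^{-t\A}\mathbb{P}\dive T_0^\sharp\|_{L^p}=\|\mathbb{P}\,e^{-t\A}\dive T_0^\sharp\|_{L^p}\le C_{\mathbb{P}}\|e^{-t\A}\dive T_0^\sharp\|_{L^p}$, after which Lemma \ref{estimates} iii) applies verbatim. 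I would then check the structural inequalities demanded by the abstract scheme: $0<\theta<1$ follows from $\theta=\frac12+\frac{d}{2p}$ and $p>d$, while $0<\sigma<\beta$ reduces, via the explicit formula for $\gamma_{p,q}$, to $\gamma_{p,p}<\gamma_{p/2,p}$, which I would verify by direct substitution.

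It remains to confirm Assumption \ref{AssG} for $G(u,t)=u\otimes u+F(t)$, which is exactly the computation displayed just above the theorem: the bilinear estimate $\|u_1\otimes u_1-u_2\otimes u_2\|_{L^{p/2}}\le(\|u_1\|_{L^p}+\|u_2\|_{L^p})\|u_1-u_2\|_{L^p}$ from H\"older's inequality gives the local Lipschitz bound on $B_\rho$ with constant $L=2\rho$, and $\|G(0,t)\|_X=\|F(t)\|_X$ is finite and bounded in $t$ because $F\in C_b(\r_+,L^{p/2})$. With all hypotheses of Theorem \ref{thm2.20} in force, assertion (i) yields a unique asymptotically almost periodic mild solution $\hat u$ in a small ball of $C_b(\r_+,L^p(\Gamma(T{\bf M})))$ provided $\|u_0\|_p$ and $\|F\|_{\infty,p/2}$ (hence $\rho$ and $L=2\rho$) are small enough, and assertion (ii) gives the exponential stability bound. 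Substituting $L=2\rho$ into the admissible range $0<\gamma<\min\{\frac{\sigma}{2},\sigma-(\frac{\alpha L\sigma\mathbf{\Gamma}(1-\theta)}{\sigma-2\alpha L})^{1/(1-\theta)}\}$ of Theorem \ref{thm2.20} produces precisely the range stated in the present theorem, which completes the proof.

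I expect the main obstacle to be the passage of the smoothing estimate through the projector $\mathbb{P}$ and the bookkeeping of the exponents $\theta,\sigma,\beta$. In particular, confirming $\sigma<\beta$ from the explicit expression of $\gamma_{p,q}$ is the delicate arithmetic step, since this is exactly the inequality that makes the $B$-estimate decay strictly faster than the free semigroup and is what the cone-inequality argument in Theorem \ref{thm2.20}(ii) relies upon.
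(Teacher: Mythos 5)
Your proposal follows the same route as the paper: the theorem is obtained by specializing Theorem \ref{thm2.20} to $B=\mathbb{P}\,\mathrm{div}$, $G(u,t)=(u\otimes u+F)(t)$, $X=L^{p/2}(\Gamma(T{\bf M}\otimes T{\bf M}))$, $Y=L^{p}(\Gamma(T{\bf M}))$, with the local Lipschitz constant $L=2\rho$ obtained from H\"older's inequality, the estimates \eqref{estimates1}--\eqref{estimates2} read off from Lemma \ref{estimates} with the exponents $\theta=\frac{d}{2}(\frac1p+\frac1d)$, $\sigma=d-1+\gamma_{p,p}$, $\beta=d-1+\frac{\gamma_{p,p}+\gamma_{p/2,p}}{2}$, and the projector $\mathbb{P}$ handled through its $L^p$-boundedness and commutation with $e^{-t\mathcal{A}}$. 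This is exactly the verification the paper carries out in the paragraphs preceding the theorem before invoking Theorem \ref{thm2.20}, so in substance the two arguments coincide.

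The one step you add beyond what the paper records is also the one step that does not survive the computation you propose. You claim that $\sigma<\beta$ reduces to $\gamma_{p,p}<\gamma_{p/2,p}$ and that this can be ``verified by direct substitution.'' Carrying out that substitution with $\gamma_{p,q}=\frac{\delta_d}{2}\left[\left(\frac1p-\frac1q\right)+\frac8q\left(1-\frac1p\right)\right]$ gives $\gamma_{p,p}=\frac{\delta_d}{2p}\left(8-\frac8p\right)$ and $\gamma_{p/2,p}=\frac{\delta_d}{2p}\left(9-\frac{16}{p}\right)$, hence $\gamma_{p/2,p}-\gamma_{p,p}=\frac{\delta_d}{2p}\left(1-\frac8p\right)$, which is nonpositive for every $p\le 8$. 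So for $d<p\le 8$ the inequality $\sigma<\beta$, as you and (implicitly) the paper set it up, is false. This defect is inherited from the paper rather than introduced by you, and it is repairable: the abstract scheme only needs \eqref{estimates1} to hold with some rate not exceeding $\beta$, and since $e^{-\sigma t}\le e^{-\sigma' t}$ for $\sigma'\le\sigma$ one may replace $\sigma$ by $\min\{\sigma,\beta\}$ throughout, at the cost of a slightly smaller admissible $\gamma$ in assertion (ii). As written, however, your ``direct substitution'' step would not close, so you should either restrict to $p>8$ or make this replacement explicit.
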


\subsection{Semilinear vectorial heat equation}\label{4.2}
The semilinear vectorial heat equation with rough coefficients on the real hyperbolic manifold ${\bf M}$ has the following form:
\begin{align}\label{CauchyHeat11}
\begin{cases}
\partial_t u = Lu + |u(t)|^{k-1}u(t) + f(t),\cr
u|_{t=0}=u_0,
\end{cases}
\end{align}
where $L=-(d-1) + \overrightarrow{\Delta}$, $k\in \mathbb{N}$, $k\geq 2$ and $u(t,.)\in \Gamma(T{\bf M})$.
We denoted that $|.|$ is the norm defined on the hyperbolic space $(M,g)$ by 
$$|u(t)| = \left< u(t),u(t) \right>_g^{1/2}.$$

Putting
$$B=\mathrm{Id}, \, G(u,t) = |u(t)|^{k-1}u(t)+ f(t), \, X(\Gamma(T{\bf M}))=Y(\Gamma(T{\bf M}))= L^p(\Gamma(T{\bf M})),$$
and
$$\theta=0,\, \sigma = \beta = d-1 + \gamma_{p,p}.$$
For $u_1,\, u_2\in B_\rho$ and $k\geq 2$ we have
\begin{eqnarray*}
\norm{G(u_1,t) - G(u_2,t)}_X &=& \norm{|u_1|^{k-1}u_1 - |u_2|^{k-1}u_2}_Y \cr
&\leq& \sum_{j=0}^{k-1} \norm{u_1-u_2}_Y\norm{u_1}_Y^j\norm{u_2}_Y^{k-1-j} \cr
&\leq& k\rho^{k-1}\norm{u_1-u_2}_Y.
\end{eqnarray*}
Therefore, $G(u,t)$ satisfy Assumption \ref{AssG} with $L=k\rho^{k-1}$ and $f\in C_b(\r_+,X)$.

Applying Lemma \ref{Thm:linear} and Theorem \ref{thm2.20} with the above settings we obtain the following result.
\begin{theorem}
Let $({\bf M},g)$ be a $d$-dimensional real hyperbolic manifold. Suppose that $f\in C_b(\r_+, L^{p}(\Gamma(T{\bf M}))$ ($p>d$).
Then, the following assertions hold true.
\begin{itemize}
\item[(i)] If the norm $\|u_0\|_{p}$, and $\norm{f}_{\infty,p}$ are sufficiently small, 
Equation \eqref{CauchyHeat11} has one and only one asymptotically almost periodic mild solution $\hat{u}$ on a small ball of  
$C_b(\r_+, L^p(\Gamma(T{\bf M})))$.

\item[(ii)] For any other mild solution $u\in C_b(\r_+, L^p(\Gamma(T{\bf M})))$ to \eqref{Cauchy'} such that $\|u(0)-\hat{u}(0)\|_p$ is small enough, we have
\begin{equation*}
\|u(t)-\hat{u}(t)\|_p \le {C_\gamma}{e^{-\gamma t}}\|u(0)-\hat{u}(0)\|_p \hbox{ for all }t>0,
\end{equation*}
here $\gamma$ is a positive constant satisfying 
$0<\gamma<\min\{\frac{\sigma}{2}, \sigma - \frac{k\alpha \rho^{k-1} \sigma}{\sigma-2k\alpha \rho^{k-1}} \}$, and $C_\gamma$ is a constant independent of $u$ and $\hat{u}$.

\end{itemize}
Here, we denote that $\norm{.}_{p} = \norm{.}_{L^p(\Gamma(T{\bf M}))}$, $\norm{.}_{\infty,p} = \norm{.}_{L^p(C_b(\r_+,\Gamma(T{\bf M})))}$.
\end{theorem}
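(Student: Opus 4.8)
The plan is to realise equation \eqref{CauchyHeat11} as a special instance of the abstract semilinear problem \eqref{CauchyNSE} and then to invoke Theorem \ref{thm2.20}. Accordingly, I would first record the identifications already made above: one takes the connection operator $B=\mathrm{Id}$, the spaces $X(\Gamma(T{\bf M}))=Y(\Gamma(T{\bf M}))=L^p(\Gamma(T{\bf M}))$, and the nonlinearity $G(u,t)=|u(t)|^{k-1}u(t)+f(t)$. Since $B=\mathrm{Id}$ and $X=Y$, the exponents in the smoothing pair \eqref{estimates1}--\eqref{estimates2} degenerate to $\theta=0$ and $\alpha=1$, and the two estimates collapse to a single inequality. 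Hence the whole task reduces to verifying the two hypotheses required by Theorem \ref{thm2.20}: the validity of \eqref{estimates1}--\eqref{estimates2} for this data, and Assumption \ref{AssG} for $G$.

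For the semigroup bounds I would specialise Lemma \ref{estimates}(i) to $q=p$. Then $[h_d(t)]^{\frac1p-\frac1p}=1$, so
\begin{equation*}
\norm{e^{-t\A}u_0}_{L^p}\le e^{-t(d-1+\gamma_{p,p})}\norm{u_0}_{L^p},
\end{equation*}
which is exactly \eqref{estimates1} with $\sigma=d-1+\gamma_{p,p}$; because $B=\mathrm{Id}$ maps $X=L^p$ into $Y=L^p$, the same inequality also serves as \eqref{estimates2} with $\theta=0$, $\alpha=1$ and $\beta=\sigma=d-1+\gamma_{p,p}$. For Assumption \ref{AssG} the local Lipschitz bound has already been computed: writing $|u_1|^{k-1}u_1-|u_2|^{k-1}u_2$ as a telescoping difference and using the pointwise estimate by $\sum_{j=0}^{k-1}|u_1-u_2|\,|u_1|^{j}|u_2|^{k-1-j}$ together with H\"older's inequality on the ball $B_\rho$ yields the constant $L=k\rho^{k-1}$, while $\sup_{t\ge0}\norm{G(0,t)}_X=\sup_{t\ge0}\norm{f(t)}_{L^p}<\infty$ follows from $f\in C_b(\r_+,L^p)$.

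With both hypotheses in hand, assertions (i) and (ii) follow by applying Theorem \ref{thm2.20}. The one point that requires care --- and which distinguishes this case from the Navier--Stokes application --- is that the Lipschitz constant is \emph{not} an independent small parameter here: $L=k\rho^{k-1}$ is itself governed by the radius $\rho$ of the ball $\B^{AAP}_\rho$. Since $k\ge 2$, however, $L\to 0$ as $\rho\to 0$, so the smallness of $L$ demanded by Theorem \ref{thm2.20} is secured simply by shrinking $\rho$. One then checks that the self-mapping inequality behind the fixed point, namely $\norm{u_0}_p+M\bigl(L\rho+\norm{f}_{\infty,p}\bigr)\le\rho$ with $M=2/\sigma$ and $L\rho=k\rho^{k}=o(\rho)$, holds as soon as $\norm{u_0}_p$ and $\norm{f}_{\infty,p}$ are small relative to the chosen $\rho$; this produces the unique asymptotically almost periodic mild solution $\hat u$ in the ball and proves (i).

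Finally, for the explicit decay rate in (ii) I would substitute the present parameters into the admissible range produced by the cone-inequality argument of Theorem \ref{thm2.20}. Putting $\theta=0$ makes $\mathbf{\Gamma}(1-\theta)=\mathbf{\Gamma}(1)=1$ and renders the exponent $\tfrac{1}{1-\theta}=1$ harmless, so the general bound $0<\gamma<\min\bigl\{\tfrac\sigma2,\ \sigma-(\tfrac{\alpha L\sigma\mathbf{\Gamma}(1-\theta)}{\sigma-2\alpha L})^{\frac{1}{1-\theta}}\bigr\}$ collapses, with $\alpha=1$ and $L=k\rho^{k-1}$, to
\begin{equation*}
0<\gamma<\min\Bigl\{\tfrac\sigma2,\ \sigma-\tfrac{k\alpha\rho^{k-1}\sigma}{\sigma-2k\alpha\rho^{k-1}}\Bigr\},
\end{equation*}
which is precisely the asserted interval; the exponential stability estimate then holds verbatim from the conclusion of Theorem \ref{thm2.20}. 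I do not anticipate any genuine obstacle beyond bookkeeping the interdependence of $\rho$ and $L$ noted above.
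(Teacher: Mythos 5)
Your proposal is correct and follows essentially the same route as the paper: the paper's proof is precisely the observation that, with $B=\mathrm{Id}$, $X=Y=L^p$, $\theta=0$, $\alpha=1$, $\sigma=\beta=d-1+\gamma_{p,p}$ and the Lipschitz constant $L=k\rho^{k-1}$ verified beforehand, the result is an instance of Lemma \ref{Thm:linear} and Theorem \ref{thm2.20} with $\mathbf{\Gamma}(1)=1$. Your additional bookkeeping of the $\rho$--$L$ interdependence and the self-mapping inequality is a harmless (and welcome) elaboration of the same argument.
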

\begin{proof}
We apply the results in Lemma \ref{Thm:linear} and Theorem \ref{thm2.20} with noting that $\Gamma(1)=1$ and $L=k\rho^{k-1}$.
\end{proof}

\end{document}